   \newtheorem{proposition}{Proposition}
   \newtheorem{definition}{Definition}
   \newtheorem{theorem}{Theorem}
   \newtheorem{proof}{Proof}
   \newtheorem{lemma}{Lemma}
\begin{document}

\title{A note on alternating minimization algorithms: Bregman frame }
\author{Tao Sun\thanks{
College of Science, National University of Defense Technology,
Changsha, 410073, Hunan,  China. Email: \texttt{nudtsuntao@163.com} }
\and Lizhi Cheng$^{*}$\thanks{The State Key Laboratory for High Performance Computation, National University of Defense Technology,
Changsha, 410073, Hunan,  China. Email: \texttt{clzcheng@nudt.edu.cn}}
}

\maketitle

\begin{abstract}
In this paper, we propose a Bregman frame for several classical alternating minimization algorithms. In the frame,  these algorithms have   uniform mathematical formulation. We also present convergence analysis for the frame algorithm. Under the Kurdyka-{\L}ojasiewicz property,  stronger convergence is obtained.
\end{abstract}

\textbf{Keywords: Alternating minimization algorithms, Bregman distance, nonconvex, Kurdyka-{\L}ojasiewicz property}

\textbf{Mathematical Subject Classification} 90C30, 90C26, 47N10
\section{Introduction}
In this paper, we are devoted to solving the following  problem
\begin{equation}\label{model}
    \min_{y,z} \Phi(y,z)=f(y)+H(y,z)+g(z),
\end{equation}
where $f$, $g$ and $H$ are all closed, and the function $H$ is continuously differentiable over $\textrm{dom}(f)\times \textrm{dom}(g)$.
It is natural to consider using the alternating minimization method for (\ref{model}), i.e., minimizing only one variable and fixing other ones in each iteration. Mathematically, the \textbf{A}lternating \textbf{M}inimization (AM) can be presented as
\begin{eqnarray}
y^{k+1}&\in& \textrm{arg}\min_{y} H(y,z^k)+f(y),\nonumber\\
z^{k+1}&\in& \textrm{arg}\min_{z} H(y^{k+1},z)+g(z).\nonumber
\end{eqnarray}
 A. Beck first proves the convergence of  AM when both functions $f$ and $g$ are convex\cite{beck2015convergence}. The sublinear convergence rate is derived provided $\Phi$ is further convex\cite{beck2015convergence}. In each iteration, the AM algorithm need to solve two subproblems which may not have explicit solutions.   For practical perspective, paper \cite{bolte2014proximal} proposes the  \textbf{P}roximal \textbf{L}inearized \textbf{A}lternating \textbf{M}inimization (PLAM) algorithm which reads as
\begin{eqnarray}
y^{k+1}&\in& \textrm{arg}\min_{y} \langle\nabla_y H(y^k,z^k),y-y^k\rangle+\frac{\alpha_k}{2}\|y-y^k\|_2^2+f(y),\nonumber\\
z^{k+1}&\in& \textrm{arg}\min_{z} \langle\nabla_z H(y^{k+1},z^k),z-z^k\rangle+\frac{\beta_k}{2}\|z-z^k\|_2^2+g(z).\nonumber
\end{eqnarray}
If the proximal maps of $f$ and $g$ are easy to calculate, so are $y^{k+1}$ and $z^{k+1}$. The the global convergence of PLAM is proved under several reasonable assumptions \cite{bolte2014proximal}. The convergence results under convex case are also proved  \cite{shefi2015rate}. Besides AM and PLAM, the \textbf{A}ugmented  \textbf{A}lternating \textbf{M}inimization (AAM) is also considered by literature \cite{attouch2010proximal}; the scheme of AAM can be described as
 \begin{eqnarray}
y^{k+1}&\in& \textrm{arg}\min_{y} H(y,z^k)+f(y)+\frac{\alpha_k}{2}\|y-y^k\|_2^2,\nonumber\\
z^{k+1}&\in& \textrm{arg}\min_{z} H(y^{k+1},z)+g(z)+\frac{\beta_k}{2}\|z-z^k\|_2^2.\nonumber
\end{eqnarray}
The augmented terms in AAM is actually to derive a sufficient descend condition of the algorithm.  The convergence results of AAM are presented in \cite{attouch2010proximal}.

In applications, we may need to combine the algorithms above. The following problem, which is provided an example to illustrate the hybrid ideal, which reads as
\begin{equation}\label{problem2}
    \min_{y\in \mathbb{R}^{N_1}, z\in \mathbb{R}^{N_2}}\lambda_1\|y\|_1+\|Ay-z\|_2^2+\lambda_2\|z\|_{1,2},
\end{equation}
where $\lambda_1,\lambda_2>0$, and $\|\cdot\|_1$  and $\|\cdot\|_{1,2}$ are the $\ell_1$ norm and $\ell_{1,2}$ norm, respectively. When $z$ is fixed, minimizing the function above needs iterations; therefore, we use the PALM strategy. While when $y$ is fixed, the minimization  of the function is quite simple. This algorithm can be generalized as
 \begin{eqnarray}\label{AM-PLAM}
y^{k+1}&\in& \textrm{arg}\min_{y} H(y,z^k)+f(y),\nonumber\\
z^{k+1}&\in& \textrm{arg}\min_{z} \langle\nabla_z H(y^{k+1},z^k),z-z^k\rangle+\frac{\beta_k}{2}\|z-z^k\|_2^2+g(z).\nonumber
\end{eqnarray}
We call scheme (\ref{AM-PLAM}) as AM-PLAM. Under the hybrid  ideal, we can see that there are at least 8 classes of alternating minimization algorithms for two block case. As the the number of the blocks increases, more   alternating minimization algorithms  are generated.

In this note, we propose a framework for the algorithms mentioned above (including the hybrid ones). The frame is based on the Bregman distance. In the frame, all the algorithm enjoy the same mathematical formulation. Convergence results are also provided for the frame.

The rest of the paper is organized as follows. Section 2 presents the framework. Section 3 contains the convergence results. Section 4 concludes the paper.
\section{Bregman frame}
To propose the framework, we need recall the definition and properties of the Bregman distance.
The Bregman distance which is an extension of the squared Euclidean distance was proposed by \cite{bregman1967relaxation}. In recent years, it has been used for various models and algorithms in signal processing and machine learning research\cite{eckstein2014understanding,yin2010analysis,yin2008bregman}.  For a convex differential function $\phi$, the Bregman distance is defined as
\begin{equation}
    B_{\phi}(x,y):=\phi(x)-\phi(y)-\langle\nabla\phi(y),x-y\rangle.
\end{equation}
If $\phi(x)=\|x\|_2^2$, we have that $B_{\phi}(x,y)=\|x-y\|_2^2$. If $\phi=x^{\top}Mx$, where $M$ is a positive semidefinite matrix, then, $B_{\phi}(x,y)=\|M^{\frac{1}{2}}(x-y)\|_2^2=(x-y)^{\top}M(x-y)$.
\begin{proposition}
Let $\phi$ be a differentiable convex function and $B_{\phi}(x,y)$ is the Bregman distance. Then,

\textbf{1.} $B_{\phi}(x,y)\geq 0$, $B_{\phi}(x,x)=0$, for any $x,y\in dom(\phi)$.

\textbf{2.} For a fixed $y$, $B_{\phi}(x,y)$ is convex.

\textbf{3.} If $\phi$ is strongly convex with $\delta$, then, $B_{\phi}(x,y)\geq \frac{\delta}{2}\|x-y\|_2^2$, for any $x,y\in dom(\phi)$.
\end{proposition}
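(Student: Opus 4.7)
The plan is to prove each of the three claims as a direct consequence of the (strong) convexity of $\phi$ together with the defining identity $B_{\phi}(x,y)=\phi(x)-\phi(y)-\langle\nabla\phi(y),x-y\rangle$. No iteration, induction, or subtle inequality chasing should be needed; each part is essentially a one-line rearrangement once the right characterization of convexity is invoked.

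For part 1, I would recall the standard first-order characterization of differentiable convexity: $\phi(x)\geq \phi(y)+\langle\nabla\phi(y),x-y\rangle$ for all $x,y\in\mathrm{dom}(\phi)$. Subtracting the right-hand side from the left yields exactly $B_{\phi}(x,y)\geq 0$. Setting $x=y$ in the definition gives $B_{\phi}(x,x)=\phi(x)-\phi(x)-\langle\nabla\phi(x),0\rangle=0$.

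For part 2, I would regard $y$ as fixed and decompose $B_{\phi}(\cdot,y)$ into $\phi(\cdot)$, which is convex by hypothesis, plus the affine function $x\mapsto -\phi(y)-\langle\nabla\phi(y),x-y\rangle$. Since a sum of a convex function and an affine function is convex, the claim follows immediately.

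For part 3, I would use the first-order characterization of $\delta$-strong convexity, namely $\phi(x)\geq\phi(y)+\langle\nabla\phi(y),x-y\rangle+\frac{\delta}{2}\|x-y\|_2^2$, and rearrange to obtain the desired bound $B_{\phi}(x,y)\geq\frac{\delta}{2}\|x-y\|_2^2$. The only potential obstacle is making sure the reader accepts the first-order strong-convexity inequality as a standing fact; since the paper invokes these properties only as background and already assumes differentiability, this should be unproblematic, and no genuine technical difficulty arises in any of the three parts.
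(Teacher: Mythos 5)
Your proof is correct: all three parts follow exactly as you describe from the first-order characterizations of convexity and strong convexity together with the affine-plus-convex decomposition, and the strong-convexity inequality you invoke in part 3 coincides with the paper's own definition of strong convexity (specialized to the differentiable case where $\partial\phi(y)=\{\nabla\phi(y)\}$). The paper states this proposition without proof as a standard background fact, so there is no alternative argument to compare against; yours is the canonical one.
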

In this paper, the convex function $\phi$ and $\psi$ are assumed to have   Lipschitz  gradients with $L_{\phi},L_{\psi}>0$. And function $H(y,z)$ satisfies the following assumption
\begin{equation}
    \|\nabla_y H(\overline{y},z)-\nabla H(y,z)\|_2\leq L_1(\|\overline{y}-y\|_2)
\end{equation}
for any fixed $z\in  \textrm{dom}(g)$, and
\begin{equation}
    \|\nabla_z H(y,\overline{z})-\nabla H(y,z)\|_2\leq L_2(\|\overline{z}-z\|_2)
\end{equation}
for any fixed $y\in  \textrm{dom}(f)$.
The Bregman frame for the algorithm can be described as
\begin{eqnarray}\label{scheme}
y^{k+1}&\in& \textrm{arg}\min_{y} H(y,z^k)+f(y)+B_{\phi^k}(y,y^k),\nonumber\\
z^{k+1}&\in& \textrm{arg}\min_{z} H(y^{k+1},z)+g(z)+B_{\psi^k}(z,z^k),\nonumber
\end{eqnarray}
where $\{\phi^k,\psi^k\}_{k=0,1,2,\ldots}$ are differentiable functions.
\begin{algorithm}
\caption{Bregman Alternating Minimization Algorithm}
\begin{algorithmic}\label{alg1}
\REQUIRE   parameters $\alpha>0$, functions $\Phi,\Psi$\\
\textbf{Initialization}: $y^0,z^{0}$\\
\textbf{for}~$k=0,1,2,\ldots$ \\
~~~ $y^{k+1}\in \textrm{arg}\min_{y} H(y,z^k)+f(y)+B_{\phi^k}(y,y^k)$ \\
~~~ $z^{k+1}\in \textrm{arg}\min_{z} H(y^{k+1},z)+g(z)+B_{\psi^k}(z,z^k)$ \\
\textbf{end for}\\
\end{algorithmic}
\end{algorithm}
We explain that   AM, PLAM and AAM all can be regarded as the special case of  Algorithm 1.

\textbf{AM}: In Algorithm 1, we set $\phi^k=\psi^k\equiv 0$. Then, Algorithm 1 reduces to the AM algorithm.

\textbf{PLAM}: Let $\phi^k(y)=\frac{\alpha_k}{2}\|y\|_2^2-H(y,z^k)$, then, $\phi^k$ is differentiable.  If $\alpha_k>L_1$, we have that
\begin{eqnarray}
\langle \psi^k(\overline{y})-\phi^k(y),\overline{y}-y\rangle&=&\alpha_k\|\overline{y}-y\|_2^2-\langle \nabla_y H(\overline{y},z^k)-\nabla_y H(y,z^k) ,\overline{y}-y\rangle\nonumber\\
&\geq&(\alpha_k-L_1)\|\overline{y}-y\|_2^2\geq0.
\end{eqnarray}
Therefore, $\phi^k$ is convex. And it can be easily verified that calculating $y^{k+1}$ equals to
\begin{equation}
   \textrm{arg}\min_{y} H(y,z^k)+f(y)+B_{\phi^k}(y,y^k).
\end{equation}
Similarly, letting $\psi^k(z)=\frac{\beta_k}{2}\|z\|_2^2-H(y^{k+1},z)$, computing $z^{k+1}$ equals to
\begin{equation}
   \textrm{arg}\min_{z} H(y^{k+1},z)+g(y)+B_{\psi^k}(z,z^k).
\end{equation}

\textbf{AAM}: Letting $\phi^k(y)=\frac{\alpha_k}{2}\|y\|_2^2$ and $\psi^k(z)=\frac{\beta_k}{2}\|z\|_2^2$, Algorithm 1 reduces to the AAM algorithm.

We can easily see that the hybrid algorithms can also be generalized as the Bregman form. Thus,   Algorithm 1 provides a unified mathematical formulation for the alternating minimization algorithms.

\section{Convergence analysis}
\subsection{Preliminary tools}
Here, we collect some basic definitions and propositions in the following.
\begin{definition}[subdifferentials\cite{mordukhovich2006variational,rockafellar2009variational}] Let  $J: \mathbb{R}^N \rightarrow (-\infty, +\infty]$ be a proper and lower semicontinuous function.
\begin{enumerate}
  \item For a given $x\in dom (J)$, the Fr$\acute{e}$chet subdifferential of $J$ at $x$, written as $\hat{\partial}J (x)$, is the set of all vectors $u\in \mathbb{R}^N$ which satisfy
  $$\lim_{y\neq x}\inf_{y\rightarrow x}\frac{J(y)-J(x)-\langle u, y-x\rangle}{\|y-x\|_2}\geq 0.$$
When $x\notin dom (J)$, we set $\hat{\partial}J(x)=\emptyset$.

\item The (limiting) subdifferential, or simply the subdifferential, of $J$ at $x\in \mathbb{R}^N$, written as $\partial J(x)$, is defined through the following closure process
$$\partial J(x):=\{u\in\mathbb{R}^N: \exists x^k\rightarrow x, J(x^k)\rightarrow J(x)~\textrm{and}~ u^k\in \hat{\partial}J(x^k)\rightarrow u~\textrm{as}~k\rightarrow \infty\}.$$
\end{enumerate}
\end{definition}
It is easy to verify that the Fr$\acute{e}$chet subdifferential is convex and closed while the subdifferential is closed. When $J$ is convex,  the definition agrees with the one in convex analysis \cite{rockafellar2015convex} as
$$\partial J(x):=\{v: J(y)\geq J(x)+\langle v,y-x\rangle~~\textrm{for}~~\textrm{any}~~y\in \mathbb{R}^N\}.$$
Let $\{(x^k,v^k)\}_{k\in \mathbb{N}}$ be a sequence in $\mathbb{R}^N\times \mathbb{R}$ such that $(x^k,v^k)\in \textrm{graph }(\partial J)$. If $(x^k,v^k)$  converges to $(x, v)$ as $k\rightarrow +\infty$ and $J(x^k)$ converges to $v$ as $k\rightarrow +\infty$, then $(x, v)\in \textrm{graph }(\partial J)$. This indicates the following simple proposition.
\begin{proposition}\label{sublimit}
If $v^k\in \partial J(x^k)$, and $\lim_{k}v^k=v$ and $\lim_{k}x^k=x$. Then, we have that
\begin{equation}
    v\in \partial J(x).
\end{equation}
\end{proposition}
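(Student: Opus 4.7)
The plan is to prove this by invoking the definition of the limiting subdifferential (the closure-of-graph definition) and using a standard diagonal argument. First, I note that, as stated, the hypothesis appears incomplete: the closure process defining $\partial J$ requires not merely that $x^k \to x$ and $v^k \to v$, but also that $J(x^k) \to J(x)$ (this is exactly the condition the sentence preceding the proposition references). I would therefore either add the assumption $J(x^k) \to J(x)$ or assume, as is common in this literature, that this sequential continuity along the relevant sequence is guaranteed (e.g.\ when $J$ is continuous on its domain or the sequences remain in a region where $J$ is continuous). With this in place, the proof proceeds as follows.

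Second, since $v^k \in \partial J(x^k)$, the definition of the limiting subdifferential provides, for each fixed $k$, sequences $\{x^{k,j}\}_{j\in\mathbb{N}}$ and $\{u^{k,j}\}_{j\in\mathbb{N}}$ with $u^{k,j} \in \hat{\partial} J(x^{k,j})$ such that, as $j\to\infty$,
\begin{equation*}
x^{k,j} \to x^k, \qquad J(x^{k,j}) \to J(x^k), \qquad u^{k,j} \to v^k.
\end{equation*}
The idea is then to extract a single diagonal sequence witnessing that $v \in \partial J(x)$.

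Third, I would invoke a standard diagonal selection: for each $k$, choose an index $j(k)$ large enough that
\begin{equation*}
\|x^{k,j(k)} - x^k\|_2 < \tfrac{1}{k}, \qquad |J(x^{k,j(k)}) - J(x^k)| < \tfrac{1}{k}, \qquad \|u^{k,j(k)} - v^k\|_2 < \tfrac{1}{k}.
\end{equation*}
Define $\tilde{x}^k := x^{k,j(k)}$ and $\tilde{u}^k := u^{k,j(k)}$. By the triangle inequality and the hypotheses $x^k\to x$, $v^k\to v$, $J(x^k)\to J(x)$, we obtain $\tilde{x}^k \to x$, $J(\tilde{x}^k) \to J(x)$, and $\tilde{u}^k \to v$, with $\tilde{u}^k \in \hat{\partial} J(\tilde{x}^k)$. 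Applying the definition of the limiting subdifferential to the diagonal sequence $\{(\tilde{x}^k,\tilde{u}^k)\}$ yields $v \in \partial J(x)$.

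The main obstacle is not technical, since the diagonal argument is routine once the definition is unpacked; rather, the delicate point is the implicit hypothesis $J(x^k) \to J(x)$. Without it, the conclusion can fail (a limit of Fréchet subgradients at points where $J$ jumps upward need not be a limiting subgradient at the limit point). I would therefore flag this explicitly and carry the assumption through the argument.
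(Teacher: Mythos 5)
Your proof is correct, and it is more than the paper actually provides: the paper offers no argument at all, simply asserting that the proposition "is indicated" by the graph-closedness property stated in the sentence immediately before it, whereas you prove that closedness property from the definition via the standard diagonal extraction. The comparison is therefore mostly about what each approach makes visible. The paper's route hides exactly the issue you flag: the preceding sentence does carry the value-convergence condition (garbled there as "$J(x^k)$ converges to $v$", surely a typo for "$J(x^k)$ converges to $J(x)$"), but the proposition as stated drops it, and your warning that the statement is then false for general lower semicontinuous $J$ is justified. For instance, take $J(x)=0$ for $x\leq 0$ and $J(x)=1-x$ for $x>0$: with $x^k=1/k$ one has $v^k=-1\in\hat{\partial}J(x^k)\subseteq\partial J(x^k)$, $x^k\to 0$, $v^k\to -1$, yet $\partial J(0)\subseteq[0,+\infty)$, so $-1\notin\partial J(0)$; the failure is precisely that $J(x^k)\to 1\neq J(0)$. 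Your diagonal argument itself is sound once $J(x^k)\to J(x)$ is assumed: the three $1/k$-approximation conditions combine with the hypotheses by the triangle inequality to produce a single sequence $(\tilde{x}^k,\tilde{u}^k)$ with $\tilde{u}^k\in\hat{\partial}J(\tilde{x}^k)$, $\tilde{x}^k\to x$, $J(\tilde{x}^k)\to J(x)$, $\tilde{u}^k\to v$, which is exactly what the definition of $\partial J(x)$ requires. One further point worth noting: the gap you identify is not merely cosmetic for this paper, since the proposition is later invoked in the proof of Lemma \ref{limit} to pass to the limit in the optimality conditions, and there the convergence of the function values $f(y^{k_j+1})\to f(y^*)$ and $g(z^{k_j+1})\to g(z^*)$ is likewise never verified; carrying the hypothesis explicitly, as you propose, would force that verification to be made.
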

A necessary condition for $x\in\mathbb{R}^N$ to be a minimizer of $J(x)$ is
\begin{equation}\label{Fermat}
\textbf{0}\in \partial J(x).
\end{equation}
When $J$ is convex, (\ref{Fermat}) is also sufficient. A point that satisfies (\ref{Fermat}) is called (limiting) critical point. The set of critical points of $J(x)$ is denoted by $\textrm{crit}(J)$.
We call $J$ is strongly convex with $\nu$ if for any $x,y\in dom(J)$ and any $v\in \partial J(x)$, it holds that
$$J(y)\geq J(x)+\langle v,y-x\rangle+\frac{\nu}{2}\|y-x\|_2^2.$$

\begin{proposition}[\cite{nesterov2004introductory}]\label{sconvex}
Assume that $J(x)$ is strongly convex with $\nu$ and $x^*\in \textrm{arg}\min J(x)$. Then, for any $x\in dom(J)$, we have
\begin{equation}
    J(x)\geq J(x^*)+\frac{\nu}{2}\|x^*-x\|_2^2.
\end{equation}
\end{proposition}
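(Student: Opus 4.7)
The proof is essentially a one-line consequence of Fermat's rule combined with the strong convexity inequality, so the plan is to simply unpack those two ingredients in the right order.

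First I would invoke the necessary optimality condition (\ref{Fermat}): since $x^{*}\in\arg\min J$ and $J$ is proper, we have $\mathbf{0}\in\partial J(x^{*})$. This is the key fact that lets us feed a concrete subgradient into the strong convexity definition. Note that $x^{*}\in\mathrm{dom}(J)$ is automatic since $J(x^{*})=\min J$ is finite, so the subdifferential at $x^{*}$ is genuinely relevant.

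Next, I would apply the definition of strong convexity stated just above the proposition, namely that for any $x,y\in\mathrm{dom}(J)$ and any $v\in\partial J(x)$,
\begin{equation}
J(y)\geq J(x)+\langle v,y-x\rangle+\frac{\nu}{2}\|y-x\|_2^{2}.\nonumber
\end{equation}
Applying this with the roles $x\leftarrow x^{*}$, $y\leftarrow x$, and the particular subgradient $v=\mathbf{0}\in\partial J(x^{*})$ obtained in the previous step, the inner product term vanishes and we are left with exactly the claimed inequality $J(x)\geq J(x^{*})+\frac{\nu}{2}\|x-x^{*}\|_2^{2}$.

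There is really no hard part here: the only thing one has to be careful about is that the definition of strong convexity stated in the excerpt is formulated via arbitrary subgradients (not just differentiable $J$), which is precisely what makes Fermat's rule the right tool. Had the definition been written only for smooth $J$ via $\nabla J$, one would have to separately justify that strong convexity plus properness yields the same inequality for any subgradient; but since the definition is already stated in subgradient form, no further work is required.
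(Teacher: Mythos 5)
Your argument is correct: since the paper's definition of strong convexity is already stated in subgradient form, taking $v=\mathbf{0}\in\partial J(x^*)$ (which holds by Fermat's rule because $x^*$ is a minimizer) and substituting into that inequality with the roles of the two points as you describe immediately yields $J(x)\geq J(x^*)+\frac{\nu}{2}\|x^*-x\|_2^2$. The paper itself gives no proof of this proposition --- it is simply cited from Nesterov's book --- and your one-line derivation is exactly the standard argument one would supply, with the right care taken about why the subgradient formulation of the definition is what makes the step legitimate.
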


\begin{proposition}\label{criL}
If $x^*=(y^*,z^*)$ is a critical point of $\Phi$, it must hold that
\begin{eqnarray}
 -\nabla H_{y}(y^*,z^*)&\in& \partial f(y^{*}),\nonumber\\
-\nabla H_{z}(y^*,z^*)&\in& \partial g(z^{*}).
\end{eqnarray}
\end{proposition}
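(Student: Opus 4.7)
The plan is to unpack the definition of a critical point and to reduce the claim to two standard limiting-subdifferential calculus rules applied to the structured objective $\Phi(y,z)=f(y)+H(y,z)+g(z)$. By definition, $x^{*}=(y^{*},z^{*})\in\mathrm{crit}(\Phi)$ means $\mathbf{0}\in\partial\Phi(y^{*},z^{*})$, so the entire argument is about computing $\partial\Phi$ explicitly at $x^{*}$.

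First I would write $\Phi(y,z)=K(y,z)+H(y,z)$, where $K(y,z):=f(y)+g(z)$ is proper and lower semicontinuous and $H$ is continuously differentiable on $\mathrm{dom}(f)\times\mathrm{dom}(g)$. Since $H$ is $C^{1}$, the classical smooth-plus-nonsmooth sum rule for the limiting subdifferential (see the references \cite{mordukhovich2006variational,rockafellar2009variational} cited in Definition~1) yields the exact equality
\begin{equation*}
\partial\Phi(y^{*},z^{*})=\partial K(y^{*},z^{*})+\nabla H(y^{*},z^{*}),
\end{equation*}
where $\nabla H=(\nabla_{y}H,\nabla_{z}H)$. Next, because $K$ is a separable sum in disjoint variables, the limiting subdifferential decomposes as a Cartesian product,
\begin{equation*}
\partial K(y^{*},z^{*})=\partial f(y^{*})\times\partial g(z^{*}).
\end{equation*}
Combining these two identities gives
\begin{equation*}
\partial\Phi(y^{*},z^{*})=\bigl(\partial f(y^{*})+\nabla_{y}H(y^{*},z^{*})\bigr)\times\bigl(\partial g(z^{*})+\nabla_{z}H(y^{*},z^{*})\bigr).
\end{equation*}
The criticality condition $\mathbf{0}\in\partial\Phi(y^{*},z^{*})$ then splits into the two inclusions $\mathbf{0}\in\partial f(y^{*})+\nabla_{y}H(y^{*},z^{*})$ and $\mathbf{0}\in\partial g(z^{*})+\nabla_{z}H(y^{*},z^{*})$, which are exactly the two relations in the proposition.

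The only nontrivial step is invoking these two calculus rules; both are standard but deserve an explicit citation, since the statement uses the limiting (not Fr\'echet) subdifferential and for general nonconvex lower semicontinuous $f,g$ one cannot rely on convex subdifferential calculus. I expect this to be the main obstacle to state cleanly, so in the write-up I would point to the precise results in \cite{mordukhovich2006variational,rockafellar2009variational} (the $C^{1}$-plus-lsc exact sum rule and the separability of the limiting subdifferential for sums in independent variables) rather than reproving them. Everything else is just bookkeeping with Cartesian products.
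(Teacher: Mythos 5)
The paper states this proposition without any proof, so there is nothing to compare against; your argument is the standard justification and is correct: the exact sum rule for a $C^1$ additive perturbation together with the product formula for the limiting subdifferential of a separable lower semicontinuous sum gives $\partial\Phi(y^{*},z^{*})=\bigl(\nabla_{y}H(y^{*},z^{*})+\partial f(y^{*})\bigr)\times\bigl(\nabla_{z}H(y^{*},z^{*})+\partial g(z^{*})\bigr)$, and $\mathbf{0}\in\partial\Phi(y^{*},z^{*})$ then splits into the two stated inclusions. The one caveat worth recording in your write-up is that the exact (rather than one-sided) sum rule needs $H$ to be strictly differentiable at $x^{*}$, which the paper's hypothesis ($H$ continuously differentiable on $\mathrm{dom}(f)\times\mathrm{dom}(g)$) only guarantees when that set is open; this is the same implicit assumption made in the literature the paper follows, so it is not a gap in your argument so much as a hypothesis you should state when citing the sum rule.
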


\begin{definition}[\cite{bolte2014proximal,attouch2013convergence}]\label{KL}
(a) The function $J: \mathbb{R}^N \rightarrow (-\infty, +\infty]$ is said to have the  Kurdyka-{\L}ojasiewicz property at $\overline{x}\in dom(\partial J)$ if there
 exist $\eta\in (0, +\infty]$, a neighborhood $U$ of $\overline{x}$ and a continuous function $\varphi: [0, \eta)\rightarrow \mathbb{R}^+$ such that
\begin{enumerate}
  \item $\varphi(0)=0$.
  \item $\varphi$ is $C^1$ on $(0, \eta)$.
  \item for all $s\in(0, \eta)$, $\varphi^{'}(s)>0$.
  \item for all $x$ in $U\bigcap\{x|J(\overline{x})<J(x)<J(\overline{x})+\eta\}$, the Kurdyka-{\L}ojasiewicz inequality holds
\begin{equation}
  \varphi^{'}(J(x)-J(\overline{x}))\textrm{dist}(\textbf{0},\partial J(x))\geq 1.
\end{equation}
\end{enumerate}

(b) Proper lower semicontinuous functions which satisfy the Kurdyka-{\L}ojasiewicz inequality at each point of $dom(\partial J)$ are called KL functions.
\end{definition}
\subsection{Convergence results}
\begin{theorem}\label{gld}
For any $k$, we have that
\begin{equation}
    \Phi(y^{k+1},z^{k+1})\leq\Phi(y^{k+1},z^{k})\leq\Phi(y^{k},z^{k}).
\end{equation}
\end{theorem}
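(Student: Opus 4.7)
The plan is to exploit the minimization characterization of the iterates directly, together with the two basic Bregman facts from Proposition 1: $B_{\phi}(x,x)=0$ and $B_{\phi}(x,y)\geq 0$. The theorem has two inequalities, one coming from each half-step of the algorithm, and they are essentially symmetric.

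First I would handle the right inequality, $\Phi(y^{k+1},z^k)\leq \Phi(y^k,z^k)$. Since $y^{k+1}$ is a minimizer of $y\mapsto H(y,z^k)+f(y)+B_{\phi^k}(y,y^k)$, comparing its value at $y^{k+1}$ with its value at $y=y^k$ yields
\begin{equation*}
H(y^{k+1},z^k)+f(y^{k+1})+B_{\phi^k}(y^{k+1},y^k)\leq H(y^k,z^k)+f(y^k)+B_{\phi^k}(y^k,y^k).
\end{equation*}
By part 1 of Proposition 1, the right-hand Bregman term vanishes and the left-hand one is nonnegative, so dropping it preserves the inequality. Adding $g(z^k)$ to both sides then gives exactly $\Phi(y^{k+1},z^k)\leq \Phi(y^k,z^k)$.

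The left inequality, $\Phi(y^{k+1},z^{k+1})\leq \Phi(y^{k+1},z^k)$, is obtained in the same way from the $z$-update: $z^{k+1}$ minimizes $z\mapsto H(y^{k+1},z)+g(z)+B_{\psi^k}(z,z^k)$, so comparing at $z^{k+1}$ versus $z^k$ and again using $B_{\psi^k}(z^k,z^k)=0$ and $B_{\psi^k}(z^{k+1},z^k)\geq 0$ yields $H(y^{k+1},z^{k+1})+g(z^{k+1})\leq H(y^{k+1},z^k)+g(z^k)$; adding $f(y^{k+1})$ finishes.

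There is no real obstacle here; the argument never uses convexity of $\phi^k,\psi^k$, strong convexity, Lipschitz bounds on $\nabla H$, or even that $y^{k+1},z^{k+1}$ are well-defined as unique minimizers — only that they achieve a value no larger than the value at the previous iterate. The only thing to be slightly careful about is that the comparison is legitimate, i.e.\ that $y^k\in\mathrm{dom}(f)$ and $z^k\in\mathrm{dom}(g)$ so the right-hand sides above are finite; this is automatic inductively from the algorithm's definition together with $y^0,z^0$ being chosen in the respective effective domains.
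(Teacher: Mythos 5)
Your proposal is correct and follows essentially the same route as the paper's own proof: compare the objective of each subproblem at the new iterate versus the previous one, then drop the Bregman terms using $B_{\phi^k}(y^{k+1},y^k)\geq 0$ and $B_{\phi^k}(y^k,y^k)=0$ (and likewise for $\psi^k$), and add back the missing summand of $\Phi$. Your extra remark about the iterates remaining in $\mathrm{dom}(f)\times\mathrm{dom}(g)$ is a sensible precaution the paper leaves implicit, but the argument is otherwise identical.
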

\begin{proof}
The scheme of  Algorithm 1 gives that
\begin{equation}\label{g1}
    H(y^{k+1},z^k)+f(y^{k+1})+B_{\phi^k}(y^{k+1},y^k)\leq H(y^{k},z^k)+f(y^k)+B_{\phi^k}(y^k,y^k).
\end{equation}
Note that $B_{\phi^k}(y^{k+1},y^k)\geq 0$ and  $B_{\phi^k}(y^{k},y^k)=0$, we can also have
\begin{equation}
    H(y^{k+1},z^k)+f(y^{k+1})\leq H(y^{k},z^k)+f(y^k).
\end{equation}
Then, we can obtain
\begin{equation}
    \Phi(y^{k+1},z^{k})\leq\Phi(y^{k},z^{k}).
\end{equation}
Similarly, we can derive the other inequality.
\end{proof}

\begin{lemma}\label{descend}
If the following condition holds
\begin{equation}\label{condition}
    \min\{\nu_{\phi^k},\nu_{\psi^k}\}>0,
\end{equation}
then, there exist $\rho>0$ such that
\begin{equation}
    \Phi(x^k)-\Phi(x^{k+1})\geq \rho\|x^k-x^{k+1}\|_2^2.
\end{equation}
\end{lemma}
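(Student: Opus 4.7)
The plan is to split the single-step decrease $\Phi(x^k) - \Phi(x^{k+1})$ along the two half-updates in Algorithm~\ref{alg1} and use the strong convexity of $\phi^k$ and $\psi^k$ to extract a quadratic lower bound from each half. Writing $x^k = (y^k, z^k)$ so that $\|x^k - x^{k+1}\|_2^2 = \|y^k - y^{k+1}\|_2^2 + \|z^k - z^{k+1}\|_2^2$, this splitting matches the target inequality precisely.

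For the $y$-half-step, I would compare the subproblem objective $H(\cdot, z^k) + f(\cdot) + B_{\phi^k}(\cdot, y^k)$ at its minimizer $y^{k+1}$ with its value at $y^k$; since $B_{\phi^k}(y^k, y^k) = 0$, rearranging and adding $g(z^k)$ to both sides yields
\begin{equation*}
\Phi(y^k, z^k) - \Phi(y^{k+1}, z^k) \geq B_{\phi^k}(y^{k+1}, y^k).
\end{equation*}
Item 3 of Proposition 1 then bounds the right-hand side below by $\frac{\nu_{\phi^k}}{2}\|y^{k+1} - y^k\|_2^2$. An identical argument on the $z$-subproblem would give
\begin{equation*}
\Phi(y^{k+1}, z^k) - \Phi(y^{k+1}, z^{k+1}) \geq \frac{\nu_{\psi^k}}{2}\|z^{k+1} - z^k\|_2^2.
\end{equation*}
Summing these two telescoping inequalities and setting $\rho = \tfrac{1}{2}\inf_k \min\{\nu_{\phi^k}, \nu_{\psi^k}\}$ would close the argument.

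I do not expect any real technical obstacle: the argument is essentially a sharpening of Theorem~\ref{gld}, where instead of discarding the nonnegative Bregman term one retains it and converts it into a quadratic via the strong-convexity bound. The only point requiring care is the reading of condition~(\ref{condition}): to obtain a single uniform $\rho > 0$ one needs the infimum over $k$ of $\min\{\nu_{\phi^k}, \nu_{\psi^k}\}$ to be strictly positive, and I would either make this standing assumption explicit or, if only pointwise positivity is available, replace the conclusion with a $k$-dependent constant $\rho_k$.
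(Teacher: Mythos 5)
Your proposal follows essentially the same route as the paper's own proof: compare each subproblem objective at its minimizer against the previous iterate, lower-bound the retained Bregman term by $\frac{\nu}{2}\|\cdot\|_2^2$ via strong convexity, and sum the two resulting inequalities. Your closing remark about needing $\inf_k \min\{\nu_{\phi^k},\nu_{\psi^k}\}>0$ for a uniform $\rho$ (and your retention of the factor $\tfrac{1}{2}$) is actually more careful than the paper, which sets $\rho=\min\{\nu_{\phi^k},\nu_{\psi^k}\}$ with an implicit $k$-dependence and drops the $\tfrac{1}{2}$.
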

\begin{proof}
Letting $\rho=\min\{\nu_{\phi^k},\nu_{\psi^k}\}$,
we easily see that
\begin{equation}
    B_{\phi^k}(y^{k+1},y^k)\geq\rho\|y^{k+1}-y^k\|_2^2
\end{equation}
Then, from (\ref{g1}), we can have that
\begin{equation}\label{l1t1}
    H(y^{k+1},z^k)+f(y^{k+1})+\rho\|y^{k+1}-y^k\|_2^2\leq H(y^{k},z^k)+f(y^k).
\end{equation}
Similarly, we can obtain that
\begin{equation}\label{l1t2}
    H(y^{k+1},z^{k+1})+g(z^{k+1})+\rho\|z^{k+1}-z^k\|_2^2\leq H(y^{k+1},z^{k})+g(z^{k}).
\end{equation}
Summing (\ref{l1t1}) and (\ref{l1t2}), we can obtain the result.
\end{proof}

\begin{lemma}\label{limit}
If the sequence $\{x^k=(y^k,z^k)\}_{k=0,1,2,\ldots}$ generated by Algorithm 1 is bounded and condition (\ref{condition})  holds and
\begin{equation}\label{condition2}
   \max\{L_{\phi^k},L_{\psi^k}\}<+\infty,
\end{equation}
 then we have
\begin{equation}
    \lim_{k}\|x^{k+1}-x^k\|_2=0.
\end{equation}
For any cluster point $x^*=(y^*,z^*)$, it is also a critical point of $\Phi$.
\end{lemma}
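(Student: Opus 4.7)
My plan has two stages. First, I will establish that $\|x^{k+1}-x^k\|_2\to 0$ by leveraging the descent inequality from Lemma~\ref{descend}. Reading condition~(\ref{condition}) as providing a uniform positive lower bound $\rho$ on the moduli $\nu_{\phi^k},\nu_{\psi^k}$ (which is what the proof of Lemma~\ref{descend} truly requires), I would argue that $\{\Phi(x^k)\}$ is nonincreasing and, because $\{x^k\}$ lies in a bounded set on which $\Phi$ is lower semicontinuous, also bounded below. Hence $\Phi(x^k)$ converges, and telescoping $\Phi(x^k)-\Phi(x^{k+1})\geq\rho\|x^{k+1}-x^k\|_2^2$ yields $\sum_k\|x^{k+1}-x^k\|_2^2<\infty$, so in particular $\|x^{k+1}-x^k\|_2\to 0$.

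For the second stage, fix a cluster point $x^*=(y^*,z^*)$ and a subsequence $x^{k_j}\to x^*$; by stage one, $x^{k_j+1}\to x^*$ as well. Writing the Fermat rule for the two inner minimizations of Algorithm~\ref{alg1} and using $\nabla_y B_{\phi^k}(y,y^k)=\nabla\phi^k(y)-\nabla\phi^k(y^k)$, I obtain
\begin{equation}
u^k:=-\nabla_y H(y^{k+1},z^k)-[\nabla\phi^k(y^{k+1})-\nabla\phi^k(y^k)]\in\partial f(y^{k+1}),
\end{equation}
together with the analogous inclusion
\begin{equation}
v^k:=-\nabla_z H(y^{k+1},z^{k+1})-[\nabla\psi^k(z^{k+1})-\nabla\psi^k(z^k)]\in\partial g(z^{k+1}).
\end{equation}
The two bracketed perturbations are bounded by $L_{\phi^k}\|y^{k+1}-y^k\|_2$ and $L_{\psi^k}\|z^{k+1}-z^k\|_2$, so condition~(\ref{condition2}) combined with stage one drives them to zero along the subsequence. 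Continuity of $\nabla H$ then gives $u^{k_j}\to-\nabla_y H(y^*,z^*)$ and $v^{k_j}\to-\nabla_z H(y^*,z^*)$.

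To invoke Proposition~\ref{sublimit} I still need $f(y^{k_j+1})\to f(y^*)$ and $g(z^{k_j+1})\to g(z^*)$. Lower semicontinuity supplies the $\liminf$ half. For the reverse inequality I would test the minimizing property of $y^{k+1}$ against the candidate $y^*$:
\begin{equation}
H(y^{k+1},z^k)+f(y^{k+1})+B_{\phi^k}(y^{k+1},y^k)\leq H(y^*,z^k)+f(y^*)+B_{\phi^k}(y^*,y^k),
\end{equation}
dominate $B_{\phi^k}(y^*,y^k)\leq\tfrac{L_{\phi^k}}{2}\|y^*-y^k\|_2^2$ via the descent lemma for convex $C^{1,1}$ functions, and let $k=k_j\to\infty$; the right-hand side converges to $H(y^*,z^*)+f(y^*)$ while the left-hand side is at least $H(y^{k_j+1},z^{k_j})+f(y^{k_j+1})$, so $\limsup_j f(y^{k_j+1})\leq f(y^*)$. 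Combined with lower semicontinuity this gives the required value convergence; the symmetric test against $z^*$ in the $z$-update (where $H(y^{k+1},z^*)\to H(y^*,z^*)$) handles $g$. Proposition~\ref{sublimit} then produces $-\nabla_y H(y^*,z^*)\in\partial f(y^*)$ and $-\nabla_z H(y^*,z^*)\in\partial g(z^*)$, and Proposition~\ref{criL} identifies $x^*$ as a critical point of $\Phi$.

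The only genuinely delicate step, and where I would expect the main obstacle to lie, is the function-value convergence along the subsequence: without it, Proposition~\ref{sublimit} cannot be applied, and the trick is to exploit both the minimizing property of the inner updates and the uniform Lipschitz bounds on $\nabla\phi^k,\nabla\psi^k$ so that $B_{\phi^k}(y^*,y^k)$ and $B_{\psi^k}(z^*,z^k)$ vanish despite the Bregman kernels varying with $k$. Everything else reduces to continuity of $\nabla H$ and the vanishing of the consecutive steps supplied by stage one.
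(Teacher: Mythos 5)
Your proof follows the same overall route as the paper's: the descent inequality from Lemma~\ref{descend} plus boundedness gives convergence of $\Phi(x^k)$ and hence vanishing of the successive differences, and then the first-order optimality conditions of the two inner minimizations are passed to the limit along a convergent subsequence and combined with Proposition~\ref{criL}. The one place where you go beyond the paper is in justifying the passage to the limit: the closedness of the limiting subdifferential (Proposition~\ref{sublimit}, or rather the graph-closure property stated just before it) requires $f(y^{k_j+1})\rightarrow f(y^*)$ and $g(z^{k_j+1})\rightarrow g(z^*)$, not merely convergence of the points, since $f$ and $g$ are only lower semicontinuous. The paper invokes ``closedness of the subdifferential'' without verifying this value convergence; you supply the standard patch, testing each inner minimization against the candidate limit and using the uniform Lipschitz bound on $\nabla\phi^k,\nabla\psi^k$ to kill the Bregman term $B_{\phi^k}(y^*,y^k)$. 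This is a genuine (if routine) gap in the paper's argument that your proof closes, so your version is the more complete one; the only residual caveats, shared with the paper, are that conditions~(\ref{condition}) and~(\ref{condition2}) must be read as uniform in $k$, and that one should note $y^*\in\mathrm{dom}(f)$, $z^*\in\mathrm{dom}(g)$ before evaluating $H$ and the objective there.
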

\begin{proof}
 The continuity of $\Phi$ indicates that $\{\Phi(x^k)\}_{k=0,1,2,\ldots}$ is bounded. From Lemma \ref{descend},  $\Phi(x^k)$ is decreasing. Thus, the sequence $\{\Phi(x^k)\}_{k=0,1,2,\ldots}$ is convergent, i.e., $\lim_{k}[\Phi(x^k)-\Phi(x^{k+1})]=0$.  With Lemma \ref{descend}, we have
\begin{equation}
    \lim_{k}\|z^{k+1}-z^k\|_2\leq\lim_{k}\frac{\Phi(x^k)-\Phi(x^{k+1})}{\rho}=0.
\end{equation}
For any cluster point $(y^*,z^*)$, there exists $\{k_j\}_{j=0,1,2,\ldots}$ such that $\lim_{j}(y^{k_j},z^{k_j})=(y^*,z^*)$. Then, we also have that $\lim_{j}(y^{k_j+1},z^{k_j+1})=(y^*,z^*)$.
From the scheme of Algorithm 1, we have the following conditions
\begin{eqnarray}
-\nabla_{y}H(y^{k_j+1},z^{k_j})+\nabla\phi^{k_j}(y^{k_j})-\nabla\phi^{k_j}(y^{k_j+1})&\in& \partial f(y^{k_j+1}),\nonumber\\
-\nabla_{z}H(y^{k_j+1},z^{k_j+1})+\nabla\psi^{k_j}(z^{k_j})-\nabla\psi^{k_j}(z^{k_j+1})&\in& \partial g(z^{k_j+1}).\nonumber
\end{eqnarray}
Letting $j\rightarrow+\infty$, with closedness of the subdifferential, we have that
\begin{eqnarray}
 -\nabla H_{y}(y^*,z^*)&\in& \partial f(y^{*}),\nonumber\\
-\nabla H_{z}(y^*,z^*)&\in& \partial g(z^{*}).\nonumber\\
\end{eqnarray}
From Proposition \ref{criL}, $x^*$ is a critical point of $\Phi$.
\end{proof}

\begin{lemma}\label{relative}
If condition (\ref{condition2}) holds, for any $k$, there exists $L>0$ such that
\begin{equation}
    \textrm{dist}(\textbf{0},\partial \Phi(x^{k+1}))\leq L\|x^{k+1}-x^k\|_2.
\end{equation}
\end{lemma}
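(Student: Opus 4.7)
\medskip

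\noindent\textbf{Proof plan.} The plan is to exploit the first-order optimality conditions for the two subproblems in Algorithm 1 at iteration $k+1$ and then compare them to the quantities that would appear in an element of $\partial\Phi(x^{k+1})$. Writing the subproblems in terms of Bregman distances, the necessary optimality conditions read
\begin{eqnarray*}
-\nabla_y H(y^{k+1},z^k)+\nabla\phi^k(y^k)-\nabla\phi^k(y^{k+1}) &\in& \partial f(y^{k+1}),\\
-\nabla_z H(y^{k+1},z^{k+1})+\nabla\psi^k(z^k)-\nabla\psi^k(z^{k+1}) &\in& \partial g(z^{k+1}).
\end{eqnarray*}
Adding $\nabla_y H(y^{k+1},z^{k+1})$ to the first inclusion and $\nabla_z H(y^{k+1},z^{k+1})$ to the second and recalling that $\partial\Phi(x)=(\partial f(y)+\nabla_y H(y,z))\times(\partial g(z)+\nabla_z H(y,z))$, I would obtain explicit vectors
\[
u^{k+1}:=\nabla_y H(y^{k+1},z^{k+1})-\nabla_y H(y^{k+1},z^k)+\nabla\phi^k(y^k)-\nabla\phi^k(y^{k+1})\in\partial_y\Phi(x^{k+1}),
\]
\[
v^{k+1}:=\nabla\psi^k(z^k)-\nabla\psi^k(z^{k+1})\in\partial_z\Phi(x^{k+1}).
\]

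\medskip

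\noindent Next I would estimate the norms of $u^{k+1}$ and $v^{k+1}$ in terms of $\|x^{k+1}-x^k\|_2$. The two Bregman terms are handled immediately by the assumption that $\phi^k,\psi^k$ have Lipschitz gradients with constants $L_{\phi^k},L_{\psi^k}$, which are finite by condition (\ref{condition2}); this yields
\[
\|\nabla\phi^k(y^k)-\nabla\phi^k(y^{k+1})\|_2\le L_{\phi^k}\|y^{k+1}-y^k\|_2,\qquad \|\nabla\psi^k(z^k)-\nabla\psi^k(z^{k+1})\|_2\le L_{\psi^k}\|z^{k+1}-z^k\|_2.
\]
For the cross difference $\nabla_y H(y^{k+1},z^{k+1})-\nabla_y H(y^{k+1},z^k)$, I would invoke the partial Lipschitz-type assumption on $\nabla H$ with constant $L_2$ (reading $L_2(\cdot)$ as $L_2\|\cdot\|_2$), giving the bound $L_2\|z^{k+1}-z^k\|_2$. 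Adding the two component estimates via the triangle inequality and setting
\[
L:=\max\{L_{\phi^k},\,L_{\psi^k}+L_2\}\cdot\sqrt{2}
\]
(or any analogous constant that swallows the individual Lipschitz constants) would then produce
\[
\textrm{dist}(\mathbf{0},\partial\Phi(x^{k+1}))\le\|(u^{k+1},v^{k+1})\|_2\le L\|x^{k+1}-x^k\|_2.
\]

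\medskip

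\noindent The main subtlety will be the cross term $\nabla_y H(y^{k+1},z^{k+1})-\nabla_y H(y^{k+1},z^k)$, since the stated hypothesis on $H$ only controls variations of $\nabla_y H$ in its first argument and variations of $\nabla_z H$ in its second. I would resolve this either by interpreting the assumption (\ref{condition}) on $H$ slightly more broadly, so that $\nabla_y H$ is also partial-Lipschitz in $z$, or by using the continuous differentiability of $H$ together with the boundedness of the iterates to get a local Lipschitz constant on a bounded set containing the sequence. With that in hand, the rest is a bookkeeping exercise of collecting constants.
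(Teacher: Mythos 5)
Your proposal follows essentially the same route as the paper: extract the first-order optimality conditions of the two subproblems, shift by $\nabla_y H(y^{k+1},z^{k+1})$ and $\nabla_z H(y^{k+1},z^{k+1})$ to build an explicit element of $\partial\Phi(x^{k+1})$, and bound its norm using the Lipschitz gradients of $\phi^k,\psi^k$ plus a Lipschitz estimate on the cross term, arriving at the same kind of constant $L$. Your flagged subtlety about the cross term $\nabla_y H(y^{k+1},z^{k+1})-\nabla_y H(y^{k+1},z^k)$ is well taken --- the paper silently invokes an undeclared constant $L_y$ for exactly this quantity, which is not covered by its stated partial-Lipschitz assumptions on $H$, so your explicit acknowledgment and proposed fixes make your version, if anything, more careful than the original.
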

\begin{proof}
The subdifferential of $\Phi(x^{k+1})$ can be presented as
\begin{equation}
    \partial\Phi(x^{k+1})=\left(\begin{array}{c}
                            \nabla_{y}H(y^{k+1},z^{k+1})+\partial f(y^{k+1}) \\
                             \nabla_{z}H(y^{k+1},z^{k+1})+\partial g(z^{k+1})
                          \end{array}\right).
\end{equation}
Recalling the optimization conditions of each iteration, we have
\begin{equation}
   v^{k+1}:= \left(\begin{array}{c}
                            \nabla_{y}H(y^{k+1},z^{k+1})-\nabla_{y}H(y^{k+1},z^{k})+\nabla\phi^{k}(y^{k})-\nabla\phi^{k}(y^{k+1}) \\
                             \nabla_{z}H(y^{k+1},z^{k+1})-\nabla_{z}H(y^{k+1},z^{k+1})+\nabla\psi^{k}(z^{k})-\nabla\psi^{k}(z^{k+1})
                          \end{array}\right)\in \partial\Phi(x^{k+1}).
\end{equation}
Therefore, we have that
\begin{eqnarray}
\|v^{k+1}\|_2&\leq& \|\nabla_{y}H(y^{k+1},z^{k+1})-\nabla_{y}H(y^{k+1},z^{k})+\nabla\phi^{k}(y^{k})-\nabla\phi^{k}(y^{k+1})\|_2\nonumber\\
&+&\|\nabla\psi^{k}(z^{k})-\nabla\psi^{k}(z^{k+1})\|_2\nonumber\\
&\leq&L_y\|z^{k+1}-z^{k}\|_2+L_{\phi^{k}}\|y^{k}-y^{k+1}\|_2+L_{\psi^{k}}\|z^{k}-z^{k+1}\|_2\nonumber\\
&\leq&\sqrt{2}(L_y+\max\{L_{\phi^{k}},L_{\psi^{k}}\})\|x^{k+1}-x^k\|_2.
\end{eqnarray}
Letting $L=\sqrt{2}(L_y+\max\{L_{\phi^{k}},L_{\psi^{k}}\})$, we can finish the proof.
\end{proof}
\begin{theorem}
If  both conditions (\ref{condition}) and (\ref{condition2}) hold, we have that
\begin{equation}
    \lim_{k}\textrm{dist}(\textbf{0},\partial \Phi(x^{k}))=0.
\end{equation}
\end{theorem}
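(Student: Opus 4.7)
The plan is to chain together the two lemmas that precede the theorem: the sufficient-decrease estimate in Lemma \ref{descend} forces consecutive iterates to come arbitrarily close to each other, and the relative-error bound in Lemma \ref{relative} then converts this into a vanishing distance from $\mathbf{0}$ to $\partial \Phi(x^k)$.

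More concretely, I would first argue that the sequence $\{\Phi(x^k)\}$ is convergent. By Theorem \ref{gld} it is nonincreasing, and since $\Phi$ is bounded below (either by assumption that (\ref{model}) admits a minimum, or because we are in the setting where cluster points exist as in Lemma \ref{limit}), it has a finite limit. Invoking Lemma \ref{descend} under condition (\ref{condition}), we get
\begin{equation*}
\rho\,\|x^{k+1}-x^k\|_2^2 \;\leq\; \Phi(x^k)-\Phi(x^{k+1}) \;\longrightarrow\; 0,
\end{equation*}
so $\|x^{k+1}-x^k\|_2 \to 0$. (One can even sum the inequality telescopically to get $\sum_k \|x^{k+1}-x^k\|_2^2 < +\infty$, which is stronger than needed here but mirrors the usual sufficient-decrease argument.)

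Next I would apply Lemma \ref{relative} under condition (\ref{condition2}), which supplies a constant $L>0$ and an element $v^{k+1}\in \partial\Phi(x^{k+1})$ with
\begin{equation*}
\mathrm{dist}(\mathbf{0},\partial \Phi(x^{k+1})) \;\leq\; \|v^{k+1}\|_2 \;\leq\; L\,\|x^{k+1}-x^k\|_2.
\end{equation*}
Combining this with the previous step and a trivial index shift yields $\lim_k \mathrm{dist}(\mathbf{0},\partial \Phi(x^k)) = 0$, which is exactly the claim.

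The only delicate point, and really the only place where hidden hypotheses enter, is ensuring that $\Phi(x^k)$ is bounded below so that the telescoping in Lemma \ref{descend} actually forces $\|x^{k+1}-x^k\|_2\to 0$. This is standard in the alternating-minimization literature and is implicit in the framework (otherwise the problem (\ref{model}) itself is ill-posed), so I would state it as a mild standing assumption and keep the rest of the proof as the two-line implication above. No KL machinery is needed for this particular statement; the KL property would only be invoked for the stronger conclusion that the whole sequence $\{x^k\}$ converges.
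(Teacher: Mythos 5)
Your proposal is correct and follows essentially the same route as the paper, which simply combines Lemma \ref{limit} (vanishing successive differences) with Lemma \ref{relative} (the relative-error bound); you merely unfold the argument inside Lemma \ref{limit} instead of citing it. Your observation about the hidden hypothesis is apt: the theorem as stated omits the boundedness of the iterate sequence that Lemma \ref{limit} requires (the paper uses boundedness of $\{x^k\}$ plus continuity to get $\{\Phi(x^k)\}$ bounded, where you instead posit $\Phi$ bounded below), so flagging it as a standing assumption is the right call.
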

\begin{proof}
Combining Lemmas \ref{limit} and \ref{relative}, we can directly obtain the result.
\end{proof}

\begin{proposition}
If $\Phi(x)$ is further a KL function and both conditions (\ref{condition}) and (\ref{condition2}) hold. Then, the sequence generated by Algorithm 1 converges to $x^*$ which is a critical point of $\Phi$.
\end{proposition}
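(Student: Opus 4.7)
The plan is to apply the now-standard Kurdyka-{\L}ojasiewicz convergence scheme (as in Attouch-Bolte-Svaiter and Bolte-Sabach-Teboulle) to this Bregman frame. The two structural ingredients that drive the argument are already in hand: the sufficient decrease $\Phi(x^k)-\Phi(x^{k+1})\geq \rho\|x^{k+1}-x^k\|_2^2$ from Lemma \ref{descend}, and the subgradient bound $\textrm{dist}(\textbf{0},\partial \Phi(x^{k+1}))\leq L\|x^{k+1}-x^k\|_2$ from Lemma \ref{relative}. Combined with the KL inequality, these will force $\{\|x^{k+1}-x^k\|_2\}$ to be summable, making $\{x^k\}$ Cauchy; Lemma \ref{limit} then identifies the limit as a critical point of $\Phi$.

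First I would set up the limiting value and the KL neighborhood. Since $\{\Phi(x^k)\}$ is decreasing and bounded below along the bounded iterates, it converges to some $\Phi^*$, and by continuity of $\Phi$ together with Lemma \ref{limit} every cluster point $x^*$ is critical and satisfies $\Phi(x^*)=\Phi^*$. If $\Phi(x^{k_0})=\Phi^*$ for some $k_0$, then the descent inequality forces $x^k=x^{k_0}$ for all $k\geq k_0$ and we are done, so assume $\Phi(x^k)>\Phi^*$ for every $k$. Fix a cluster point $x^*$; using $\|x^{k+1}-x^k\|_2\to 0$ from Lemma \ref{limit} together with the uniformized version of the KL property, one obtains an index $k_1$ and a concave desingularizing function $\varphi$ such that for $k\geq k_1$ the iterates lie in the KL neighborhood and
\begin{equation}
\varphi'(\Phi(x^k)-\Phi^*)\,\textrm{dist}(\textbf{0},\partial \Phi(x^k))\geq 1.
\end{equation}

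Combining this inequality with Lemma \ref{relative} yields $\varphi'(\Phi(x^k)-\Phi^*)\geq 1/(L\|x^k-x^{k-1}\|_2)$, and the concavity of $\varphi$ gives the telescoping estimate
\begin{equation}
\varphi(\Phi(x^k)-\Phi^*)-\varphi(\Phi(x^{k+1})-\Phi^*)\geq \varphi'(\Phi(x^k)-\Phi^*)[\Phi(x^k)-\Phi(x^{k+1})].
\end{equation}
Substituting the descent inequality of Lemma \ref{descend} leads to $\|x^{k+1}-x^k\|_2^2\leq (L/\rho)\|x^k-x^{k-1}\|_2\,\Delta_k$, where $\Delta_k:=\varphi(\Phi(x^k)-\Phi^*)-\varphi(\Phi(x^{k+1})-\Phi^*)$. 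Using the elementary bound $2\sqrt{ab}\leq a+b$ converts this into $2\|x^{k+1}-x^k\|_2\leq \|x^k-x^{k-1}\|_2+(L/\rho)\Delta_k$; summing over $k\geq k_1$ telescopes the right-hand side and proves $\sum_k\|x^{k+1}-x^k\|_2<+\infty$. Hence $\{x^k\}$ is Cauchy, converges to some limit $x^*$, and Lemma \ref{limit} identifies $x^*$ as a critical point of $\Phi$.

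The step I expect to require the most care is establishing that the iterates eventually remain in one KL neighborhood with a single desingularizing function $\varphi$, since different cluster points could in principle require different KL data and the sequence is only known a priori to have vanishing consecutive differences. This is precisely what the uniformized KL property of Bolte-Sabach-Teboulle is designed to handle, and its invocation should be made explicit; once this is set up, the rest of the argument is a mechanical combination of Lemmas \ref{descend} and \ref{relative} with the telescoping computation above.
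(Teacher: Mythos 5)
Your argument is correct and is essentially the paper's own proof: the paper simply invokes the abstract convergence result [Lemma 2.6, Attouch--Bolte--Svaiter] applied to the sufficient-decrease estimate of Lemma \ref{descend} and the relative-error bound of Lemma \ref{relative}, and your write-up just unpacks the internals of that cited lemma (uniformized KL property, concavity/telescoping estimate, summability of $\|x^{k+1}-x^k\|_2$). No substantive difference in route, so nothing further to compare.
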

\begin{proof}
Lemmas \ref{descend} and \ref{relative}, together with [Lemma 2.6, \cite{attouch2013convergence}], directly give the result.
\end{proof}
\section{Concluding remarks}
In this paper, we proposed a framework algorithm for alternating minimization algorithms. Thus, quite various alternating minimization algorithms have a same mathematical formulation. We also provide the convergence results of the proposed abstract algorithm.

In fact, the proposed algorithm can be extended to multi-block case.  The multi-block minimization model can be described as
 \begin{equation}\label{model2}
    \min_{x_1,x_2,\ldots,x_n} \Phi(x_1,x_2,\ldots,x_n)=H(x_1,x_2,\ldots,x_n)+\sum_{i=1}^n f_i(x_i),
\end{equation}
where $x_i\in \mathbb{R}^{n_i}$. The  multi-block Bregman alternating minimization algorithm then can be presented as
 \begin{eqnarray}\label{scheme2}
x_i^{k+1}&\in& \textrm{arg}\min_{x\in \mathbb{R}^{n_i}} H(x_1^{k+1},x_2^{k+1},\ldots,x_{i-1}^{k+1},x,x_{i+1}^k,\ldots,x_n^k)+f_i(x)+B_{\phi_i^k}(y,y^k).
\end{eqnarray}
The convergence analysis is similar to the one of the two-block case.
\section*{Acknowledgments}
We are grateful for the support from the National Natural Science Foundation of Hunan Province, China (13JJ2001), and the Science Project of National University
of Defense Technology (JC120201), and National Science Foundation of China (No.61402495).


\begin{thebibliography}{1}
\bibitem{attouch2010proximal} Attouch H, Bolte J, Redont P, et al. Proximal alternating minimization and projection methods for nonconvex problems: an approach based on the Kurdyka-Lojasiewicz inequality[J]. Mathematics of Operations Research, 2010, 35(2): 438-457.

\bibitem{attouch2013convergence} Attouch H, Bolte J, Svaiter B F. Convergence of descent methods for semi-algebraic and tame problems: proximal algorithms, forward¨Cbackward splitting, and regularized gauss¨Cseidel methods[J]. Mathematical Programming, 2013, 137(1-2): 91-129.

\bibitem{beck2015convergence}Beck A. On the convergence of alternating minimization for convex programming with applications to iteratively reweighted least squares and decomposition schemes[J]. SIAM Journal on Optimization, 2015, 25(1): 185-209.

\bibitem{bolte2014proximal}Bolte J, Sabach S, Teboulle M. Proximal alternating linearized minimization for nonconvex and nonsmooth problems[J]. Mathematical Programming, 2014, 146(1-2): 459-494.

\bibitem{bregman1967relaxationBregman} L M. The relaxation method of finding the common point of convex sets and its application to the solution of problems in convex programming[J]. USSR computational mathematics and mathematical physics, 1967, 7(3): 200-217.

\bibitem{eckstein2014understanding}Eckstein J, Yao W. Understanding the convergence of the alternating direction method of multipliers: Theoretical and computational perspectives[J]. Pac. J. Optim.  2015.

\bibitem{mordukhovich2006variational}Mordukhovich B S. Variational analysis and generalized differentiation I: Basic theory[M]. Springer Science \& Business Media, 2006.

\bibitem{nesterov2013introductory} Nesterov Y. Introductory lectures on convex optimization: A basic course[M]. Springer Science \& Business Media, 2013.

\bibitem{rockafellar2009variational}Rockafellar R T, Wets R J B. Variational analysis[M]. Springer Science \& Business Media, 2009.

\bibitem{ekeland1976convex} Ekeland I, Temam R. Convex analysis and 9 variational problems[M]. Siam, 1976.

\bibitem{shefi2015rate}Shefi R, Teboulle M. On the rate of convergence of the proximal alternating linearized minimization algorithm for convex problems[J]. EURO Journal on Computational Optimization, 2015: 1-20.

\bibitem{yin2010analysis} Yin W. Analysis and generalizations of the linearized Bregman method[J]. SIAM Journal on Imaging Sciences, 2010, 3(4): 856-877.

\bibitem{yin2008bregman}Yin W, Osher S, Goldfarb D, et al. Bregman iterative algorithms for $\ell_1$-minimization with applications to compressed sensing[J]. SIAM Journal on Imaging Sciences, 2008, 1(1): 143-168.
\end{thebibliography}
\end{document}